\documentclass{file}
\usepackage{pat}
\usepackage{paralist}
\usepackage{dsfont}  
\usepackage[utf8x]{inputenc}
\usepackage{skull}
\usepackage{paralist}
\usepackage{centernot}
\usepackage{mathtools}
\usepackage{thmtools}
\usepackage{hyperref}
\usepackage[table]{xcolor}
\usepackage{stmaryrd,tikz}
\usetikzlibrary{calc}
\usepackage{graphicx}
\usepackage{booktabs}
\usepackage{listings}
\usepackage{pifont}%
\usepackage[normalem]{ulem}
\usepackage{cancel,float}
\usepackage{comment}

\usepackage{todonotes}
\usepackage{mathdots}
\usepackage{bm}
\usepackage{etoolbox}
\definecolor{maroon}{rgb}{0.5, 0.0, 0.0}
\definecolor{darkblue}{rgb}{0.0, 0.0, 0.55}

\usepackage[longnamesfirst,numbers,sort&compress]{natbib}

\usepackage[mathlines]{lineno}
\newcommand*\patchAmsMathEnvironmentForLineno[1]{%
 \expandafter\let\csname old#1\expandafter\endcsname\csname #1\endcsname
 \expandafter\let\csname oldend#1\expandafter\endcsname\csname end#1\endcsname
 \renewenvironment{#1}%
    {\linenomath\csname old#1\endcsname}%
    {\csname oldend#1\endcsname\endlinenomath}}%
\newcommand*\patchBothAmsMathEnvironmentsForLineno[1]{%
 \patchAmsMathEnvironmentForLineno{#1}%
 \patchAmsMathEnvironmentForLineno{#1*}}%
\AtBeginDocument{%
\patchBothAmsMathEnvironmentsForLineno{equation}%
\patchBothAmsMathEnvironmentsForLineno{align}%
\patchBothAmsMathEnvironmentsForLineno{flalign}%
\patchBothAmsMathEnvironmentsForLineno{alignat}%
\patchBothAmsMathEnvironmentsForLineno{gather}%
\patchBothAmsMathEnvironmentsForLineno{multline}%
}

\definecolor{brightmaroon}{rgb}{0.76, 0.13, 0.28}
\definecolor{linkblue}{rgb}{0, 0.337, 0.227}
\newcommand{\eps}{\varepsilon}

\newcommand{\supp}{\operatorname{supp}}

\newcommand{\Cay}{\operatorname{Cay}}
\newcommand{\Good}{\operatorname{Good}}

\newrobustcmd{\onesub}{\mathord{\includegraphics{one-sub}}}
\newrobustcmd{\leftup}{\mathord{\includegraphics{left-up}}}
\makeatletter
\newcommand{\xMapsto}[2][]{\ext@arrow 0599{\Mapstofill@}{#1}{#2}}
\def\Mapstofill@{\arrowfill@{\Mapstochar\Relbar}\Relbar\Rightarrow}
\makeatother

\title{\MakeUppercase{On {\Large{$\varepsilon$}}-Matrix Product Factorization of graphs}}
\author{Farzad Maghsoudi\,\,\,\,\,Bobby Miraftab\thanks{School of Computer Science, Carleton University, Ottawa, ON, Canada.}  \,\,\,\,\, Sho Suda\thanks{Department of Mathematics, National Defense Academy of Japan, Yokosuka, Kanagawa 239-8686,
Japan}}

\date{}

\begin{document}

\maketitle

\begin{abstract}
We introduce an approximate version of matrix product factorization for graphs. A simple graph $G$ on $n$ vertices is said to admit an $\varepsilon$-matrix product factorization if there exist simple graphs $H$ and $K$ on the same vertex set such that $A(H)A(K)$ and $A(G)$ disagree in at most $\varepsilon n^{2}$ entries. This Hamming-type relaxation preserves, outside the error set, the exact interpretation of each edge as having a unique $H$-then-$K$ two-step witness. We establish equivalent matrix, and witness formulations, showing that the sets $N_H(w)\times N_K(w)$ form an approximate disjoint decomposition of the ordered adjacency relation of $G$, and we derive quantitative constraints involving walk counts and the degrees of the factor graphs.

We then construct approximate factorizations for several graph families. Every complete graph $K_n$ has matrix-product-factorization distance $O(1/n)$, despite the exact congruence obstruction that permits exact factorization only when $n\equiv 1\pmod 4$. More generally, a blow-up of a fixed graph on $r$ vertices admits an $\varepsilon$-factorization with $\varepsilon\le r/n$, and the construction is exact whenever every non-isolated part has even order. For bipartite graphs, we give one-sided factorizations that realize one orientation of almost all edges. In particular, every tree on $n\ge2$ vertices admits an $\varepsilon$-factorization with $\varepsilon\le 1/n$, although no nontrivial tree is exactly factorizable. These results show that rigid exact obstructions may disappear under a vanishing proportion of entrywise errors.
\end{abstract}
\section{Introduction}
\label{sec:introduction}

Let $G$ be a simple graph on an ordered vertex set
$V(G)=\{v_1,\dots,v_n\}$.  If $H$ and $K$ are simple graphs on the same
vertex set, then
\[
  \bigl(A(H)A(K)\bigr)_{ij}
  =
  \sum_{\ell=1}^n A(H)_{i\ell}A(K)_{\ell j}
  =
  \bigl|N_H(v_i)\cap N_K(v_j)\bigr|.
\]
Thus, the $(i,j)$-entry counts the two-step walks from $v_i$ to $v_j$ that first use an edge of $H$ and then an edge of $K$. Although $A(H)$ and $A(K)$ are symmetric 0-1 matrices with zero diagonal, their product need not have these properties. Therefore, the condition $A(H)A(K)=A(G)$ is strong: each ordered edge of $G$ must have exactly one such two-step walk, while each ordered non-edge, including each diagonal pair, must have none.
Moreover, because $A(G)$ is symmetric, an exact factorization forces $A(H)A(K)=A(K)A(H)$.

The study of products of adjacency matrices has its roots in work on
commuting graphs and commuting decompositions
\cite{akbari2007commuting,akbari2009commutativity,somodi2017construction}.
The matrix product of labelled graphs was introduced by Prasad, Sudhakara,
Sujatha, and Vinay \cite{prasad2013matrix}, who characterized when the product of two adjacency matrices is graphical and related this condition to an alternating two-colour, or diamond condition.  A
modulo-2 version was subsequently investigated in
\cite{prasad2014modulo}. Related matrix equations, products involving $k$-complements, commuting decompositions of complete multipartite graphs,
and the companion-existence problem were studied in
\cite{bhat2016matrix,bhat2018commuting,bhat2022algorithm}; this first phase
of the subject is surveyed in \cite{sudhakara2023products}.

Determining which graphs can be written as a product of two adjacency matrices was developed by Maghsoudi, Miraftab, and Suda \cite{maghsoudi2023matrix}. They introduced
the class of matrix-product-factorizable graphs, classified complete graphs
and complete bipartite graphs, and obtained constructions for several further graph families. In particular, $K_n$ is exactly factorizable if and only if $n\equiv 1\pmod 4$, whereas $K_{m,n}$ is exactly
factorizable if and only if both parts have even size. Akbari, Fan, Hu, Miraftab, and Wang developed spectral methods for the exact problem and,
among other results, proved that every nontrivial tree is not exactly factorizable \cite{miraftab2025factorability}. The theory has also been extended to infinite graphs \cite{miraftab2026infinite}.  More recent work
characterizes prime factorizations and factorable forests, tori, and grids
\cite{akbari2025prime}, formulates factorability of Cayley graphs through unique group factorizations and Sidon-pair conditions
\cite{herman2025cayley}, and studies matrix product factorizations inside symmetric association schemes \cite{herman2026association}.

All of these results deal with exact equality. However, exact factorization does not show how far a graph is from being factorizable. A graph may fail the product condition on many of its $n^2$ ordered pairs, or only on a small number because of a parity, arithmetic, or boundary issue. Both cases are simply called “not factorizable.” The
complete-graph classification already suggests the issue: the congruence
condition $n\equiv1\pmod4$ is an absolute obstruction to exact
factorization, but deleting a bounded number of vertices from a nearby exact
construction removes only $O(n)$ of the $n^2$ matrix entries. Similar behavior occurs for blow-ups with odd parts and for sparse bipartite graphs
This motivates a stability version of matrix-product factorization, where the product condition is required to hold on almost all ordered pairs.

\begin{defn}
\label{def:eps-factorization}
Let $0\leq \eps\leq 1$, and let $G$ be a simple graph on $n$ vertices.  We
say that $G$ admits an \defin{$\eps$-matrix product factorization}, or simply
an \defin{$\eps$-factorization}, if there exist simple graphs $H$ and $K$ on
$V(G)$ and a set of ordered pairs
$\mathcal E\subseteq V(G)\times V(G)$, and $|\mathcal E|\leq \eps n^2$,
such that, for every $(u,v)\notin\mathcal E$,
\[
  |N_H(u)\cap N_K(v)|
  =
  \mathbf 1_{\{uv\in E(G)\}}.
\]
The set $\mathcal E$ is called an \defin{error set}, and $H,K$ are called
\defin{factor graphs}.
\end{defn}

Equivalently, after choosing a common ordering of the vertices, the matrices
$A(H)A(K)$ and $A(G)$ disagree in at most $\eps n^2$ positions.  It is useful
to regard
\[
  \delta_{\mathrm{MPF}}(G)
  :=
  \frac{1}{n^2}
  \min_{H,K}
  \bigl|
    \{(i,j)\in[n]^2:
       (A(H)A(K))_{ij}\neq A(G)_{ij}\}
  \bigr|
\]
as the normalized matrix-product-factorization distance of $G$; then $G$
admits an $\eps$-factorization precisely when
$\delta_{\mathrm{MPF}}(G)\leq\eps$.

The particular error model in Definition~\ref{def:eps-factorization} is
deliberate. We measure the support of the discrepancy rather than a
Frobenius or operator norm because the defining information is local and
combinatorial.  At every non-error pair, an edge still has exactly one
witness and a non-edge still has none; an entry equal to $2$, for example,
is recorded as a failure of uniqueness rather than as a small numerical
perturbation of an entry equal to $1$.  The errors are ordered because
$A(H)A(K)$ need not be symmetric, even though $H$ and $K$ are undirected.
Finally, the normalization by $n^2$ is the natural dense-matrix
normalization and makes the notion compatible with Hamming-type stability
and graph-edit questions.  The case $\eps=0$ recovers the exact theory.

Now, if $G$ has $m$ edges, taking one
factor graph to be empty gives the zero product and hence the elementary
bound
\[
  \delta_{\mathrm{MPF}}(G)\leq \frac{2m}{n^2}.
\]
Consequently, the normalized parameter alone is automatically small for
very sparse graphs.  For this reason, throughout the paper we retain the
unnormalized number of error entries and seek structured factorizations that
preserve a substantial part of the adjacency relation, rather than relying
only on the zero-product bound.  This distinction is particularly relevant
for trees and sparse bipartite graphs.

We now summarize our main results. First, we establish several structural
forms of approximate factorization.  Outside the error set, the product has
the same unique-witness interpretation as in the exact theory.  For each
vertex $w$, $N_H(w)\times N_K(w)$ consists of the ordered pairs witnessed by $w$. We also derive walk-count and
row-support estimates, including quantitative constraints involving the
degrees and maximum degrees of the factor graphs.

Second, we show that the exact congruence obstruction for complete graphs is
unstable in matrix Hamming distance.  More precisely, every complete graph
admits an $\eps$-factorization with
\[
\eps\leq
\begin{cases}
0, & n\equiv1\pmod4,\\[1mm]
\dfrac1n, & n\equiv0\pmod4,\\[2mm]
\dfrac1n, & n\equiv2\pmod4,\\[2mm]
\dfrac{n+1}{n^2}, & n\equiv3\pmod4.
\end{cases}
\]
Thus every $K_n$ is at normalized distance $O(1/n)$ from a product of two
adjacency matrices, even though three of the four congruence classes are
excluded from exact factorization.

Third, we develop a blow-up construction.  If $G$ is a blow-up of a fixed
graph $F$ on $r$ vertices, with parts $V_1,\dots,V_r$, then
\[
  \eps
  \leq
  \frac{1}{n^2}
  \sum_{\substack{b\in[r]\\ |V_b|\,\mathrm{odd}}}
  \sum_{a\in N_F(b)} |V_a|
  \leq \frac{r}{n}.
\]
The construction is exact whenever every non-isolated part has even size.
Complete multipartite graphs arise as an immediate special case.  This
result identifies the unmatched vertices in odd parts as the only source of
error and gives a general stability mechanism for template-based graph
families.

Finally, for a bipartite graph with $m$ edges we construct factors that
realize one ordered orientation of almost all edges exactly.  
When one bipartition class has even size, the construction uses exactly $m$ error
entries; after removing one vertex $y_0$ from an odd class, it uses
$m+d_G(y_0)$ errors.  
Applied to trees, this yields $\eps\leq \frac1n$, with the sharper bound $(n-1)/n^2$ whenever one bipartition class has even cardinality.  
Hence the rigid exact obstruction for trees coexists with a uniformly vanishing approximate error, while the explicit error count shows that the construction is genuinely stronger than simply discarding the whole adjacency matrix.

The paper is organized as follows.  In~\Cref{sec:basic} we give the
matrix, and witness formulations of $\eps$-factorization and derive basic quantitative constraints.  
~\Cref{sec:complete} treats
complete graphs.  
~\Cref{sec:blowups} develops the blow-up and complete multipartite constructions.  ~\Cref{sec:trees} considers bipartite
graphs and trees.

\section{Basic Structural Properties}
\label{sec:basic}

We start with some basic definitions and notations.
For two matrices $B=(b_{ij})$ and $C=(c_{ij})$ of the same size,
their \defin{Hadamard product} is the entrywise product
\[
  B\odot C := (b_{ij}c_{ij}).
\]
For a matrix $M=(m_{ij})$, we write $\|M\|_0:=|\{(i,j):m_{ij}\neq 0\}|$ for the number of non-zero entries of $M$.  In particular, if
$E\in\{0,1\}^{n\times n}$, then
\[
  \|E\|_0=\sum_{i,j}E_{ij}.
\]
We write $J=J_n$ for the $n\times n$ all-ones matrix.

\begin{lem}
\label{lem:hadamard-form}
Let $G$ be a simple graph on $n$ vertices.  Then $G$ has an
$\eps$-factorization if and only if, after choosing an ordering
$V(G)=\{v_1,\dots,v_n\}$, there exist simple graphs $H,K$ on $V(G)$ and a
matrix $E\in\{0,1\}^{n\times n}$ such that
\begin{enumerate}
\item $\|E\|_0\le \eps n^2$.
\item $ \bigl(A(H)A(K)-A(G)\bigr)\odot (J-E)=0$.
\end{enumerate}
\end{lem}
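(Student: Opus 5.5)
The plan is a direct translation between the set-theoretic data of Definition~\ref{def:eps-factorization} and its matrix encoding, via the identity $\bigl(A(H)A(K)\bigr)_{ij}=|N_H(v_i)\cap N_K(v_j)|$ from the introduction. Fix the ordering $V(G)=\{v_1,\dots,v_n\}$ once and for all, and identify an ordered pair $(v_i,v_j)$ with the index $(i,j)\in[n]^2$. This identifies subsets $\mathcal E\subseteq V(G)\times V(G)$ bijectively with matrices $E\in\{0,1\}^{n\times n}$: set $E_{ij}=1$ exactly when $(v_i,v_j)\in\mathcal E$. Under this bijection $|\mathcal E|=\|E\|_0$, so condition~(1) is exactly the size bound $|\mathcal E|\le\eps n^2$.

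For condition~(2), I compute the $(i,j)$ entry of the Hadamard product:
\[
  \Bigl(\bigl(A(H)A(K)-A(G)\bigr)\odot (J-E)\Bigr)_{ij}
  =\bigl(|N_H(v_i)\cap N_K(v_j)|-\mathbf 1_{\{v_iv_j\in E(G)\}}\bigr)(1-E_{ij}).
\]
Here I use $A(G)_{ij}=\mathbf 1_{\{v_iv_j\in E(G)\}}$, which also holds on the diagonal since $A(G)_{ii}=0$ for a simple graph. Because $1-E_{ij}\in\{0,1\}$, this entry vanishes for all $(i,j)$ if and only if, for every $(i,j)$ with $E_{ij}=0$, that is every $(v_i,v_j)\notin\mathcal E$, we have $|N_H(v_i)\cap N_K(v_j)|=\mathbf 1_{\{v_iv_j\in E(G)\}}$. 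This is precisely the defining condition in Definition~\ref{def:eps-factorization}.

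Both directions then follow by transporting the data. Given $H,K,\mathcal E$ from the definition, take the associated $E$. Conversely, given $H,K,E$, take $\mathcal E=\{(v_i,v_j):E_{ij}=1\}$; the factor graphs are the same in both cases. I do not expect a genuine obstacle. The only points needing care are that diagonal pairs $(v_i,v_i)$ are included as ordered pairs (and are handled correctly because $A(G)$ has zero diagonal), and that the equivalence does not depend on the chosen ordering, since a common relabelling permutes rows and columns of all the matrices simultaneously.
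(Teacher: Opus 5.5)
Your proposal is correct and matches the paper's proof: both identify the error set $\mathcal E$ with the $0$--$1$ matrix $E$ via $E_{ij}=1 \iff (v_i,v_j)\in\mathcal E$ (so $|\mathcal E|=\|E\|_0$). Both then read the Hadamard identity entrywise, using $(A(H)A(K))_{ij}=|N_H(v_i)\cap N_K(v_j)|$, as agreement with $A(G)$ wherever $E_{ij}=0$. Your remarks on the diagonal entries and on independence of the chosen ordering are harmless extras that the paper leaves implicit.
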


\begin{proof}
Fix an ordering $V(G)=\{v_1,\dots,v_n\}$.  For every $i,j$,
\[
  (A(H)A(K))_{ij}
  =
  \sum_{\ell=1}^n A(H)_{i\ell}A(K)_{\ell j}
  =
  |N_H(v_i)\cap N_K(v_j)|.
\]
Also, we have  $A(G)_{ij}=\mathbf 1_{\{v_iv_j\in E(G)\}}$.
The Hadamard identity is equivalent to saying that the two matrices agree at all entries where $E$ is zero.  Its $(i,j)$-entry is $\bigl((A(H)A(K))_{ij}-A(G)_{ij}\bigr)(1-E_{ij})$.
Thus, if $E_{ij}=0$, then $(A(H)A(K))_{ij}=A(G)_{ij}$,
while entries with $E_{ij}=1$ are ignored.
If the matrix condition holds, define $\mathcal E:=\{(v_i,v_j):E_{ij}=1\}$.
Then $|\mathcal E|=\|E\|_0\le \eps n^2$.
For every $(v_i,v_j)\notin\mathcal E$, we have $E_{ij}=0$, and hence we obtain
\[
  |N_H(v_i)\cap N_K(v_j)|
  =
  (A(H)A(K))_{ij}
  =
  A(G)_{ij}
  =
  \mathbf 1_{\{v_iv_j\in E(G)\}}.
\]
Therefore $H,K,\mathcal E$ give an $\eps$-factorization.

Conversely, let $H,K$ be the factor graphs and let
$\mathcal E\subseteq V(G)\times V(G)$
be the error set.  Define $E\in\{0,1\}^{n\times n}$ by
\[
  E_{ij}=1
  \quad\Longleftrightarrow\quad
  (v_i,v_j)\in\mathcal E.
\]
Then we have  $\|E\|_0=|\mathcal E|\le \eps n^2$.
Moreover, if $E_{ij}=0$, then $(v_i,v_j)\notin\mathcal E$, so the definition gives
\[
  |N_H(v_i)\cap N_K(v_j)|
  =
  \mathbf 1_{\{v_iv_j\in E(G)\}}.
\]
Equivalently, we have  $(A(H)A(K))_{ij}=A(G)_{ij}$ whenever $E_{ij}=0$.
Thus we obtain
\[
  \bigl(A(H)A(K)-A(G)\bigr)\odot (J-E)=0.\qedhere
\]
\end{proof}

\begin{defn}
For a fixed matrix-form $\eps$-factorization $(H,K,E)$, let
\[
  \Good:=\{(i,j)\in[n]\times[n]:E_{ij}=0\}
\]
be the set of non-error entries.
\end{defn}

In the next lemma, we show that for every $(i,j)\in\Good$, there is at most one vertex
$v_\ell$ such that
\[
  v_iv_\ell\in E(H)
  \quad\text{and}\quad
  v_\ell v_j\in E(K).
\]
\begin{lem}
\label{lem:unique-witness}
Suppose $G$ has an $\eps$-factorization $(H,K,E)$ with respect to the ordering $V(G)=\{v_1,\dots,v_n\}$.  Then for
every $(i,j)\in\Good$,
\[
  |N_H(v_i)\cap N_K(v_j)|=A(G)_{ij}\in\{0,1\}.
\]

\end{lem}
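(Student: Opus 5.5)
The statement follows directly from Lemma~\ref{lem:hadamard-form} together with the expansion of the matrix product as a neighbourhood intersection. No new ideas are needed; we only read off the $(i,j)$-entry of the Hadamard identity on a good pair.

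First, fix $(i,j)\in\Good$, so that $E_{ij}=0$ and hence $(J-E)_{ij}=1$. The Hadamard condition of Lemma~\ref{lem:hadamard-form} gives
\[
  \bigl((A(H)A(K))_{ij}-A(G)_{ij}\bigr)\cdot 1=0,
  \qquad\text{so}\qquad
  (A(H)A(K))_{ij}=A(G)_{ij}.
\]
Second, expand the product entrywise as in the proof of Lemma~\ref{lem:hadamard-form}:
\[
  (A(H)A(K))_{ij}=\sum_{\ell=1}^n A(H)_{i\ell}A(K)_{\ell j}=|N_H(v_i)\cap N_K(v_j)|,
\]
since each summand is $1$ exactly when $v_\ell\in N_H(v_i)\cap N_K(v_j)$. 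Third, observe that $A(G)$ is the adjacency matrix of a simple graph. Its entries are therefore $A(G)_{ij}=\mathbf 1_{\{v_iv_j\in E(G)\}}\in\{0,1\}$, with $A(G)_{ii}=0$ on the diagonal.

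Combining these three observations gives the displayed equality, and the value lies in $\{0,1\}$. As the sentence preceding the lemma anticipates, this means there is at most one witness $v_\ell$ with $v_iv_\ell\in E(H)$ and $v_\ell v_j\in E(K)$. There is no real obstacle. The only point requiring care is that the diagonal pairs $(i,i)$ are included in $\Good$; these are handled uniformly because $A(G)_{ii}=0$.
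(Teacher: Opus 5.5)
Your proposal is correct and takes essentially the same route as the paper: read off $(A(H)A(K))_{ij}=A(G)_{ij}$ from the Hadamard identity at a good entry, expand the product as $|N_H(v_i)\cap N_K(v_j)|$, and use that $G$ is simple, so $A(G)_{ij}\in\{0,1\}$. Your remark about the diagonal pairs is a harmless addition that the paper leaves implicit.
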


\begin{proof}
If $(i,j)\in\Good$, then $E_{ij}=0$.  
It follows from \Cref{lem:hadamard-form}, that
$(A(H)A(K))_{ij}=A(G)_{ij}$.
Since $G$ is simple, $A(G)_{ij}\in\{0,1\}$.  
On the other hand, we have
\[
  (A(H)A(K))_{ij}
  =
  \sum_{\ell=1}^n A(H)_{i\ell}A(K)_{\ell j}
  =
  |\{v_\ell:v_iv_\ell\in E(H),\ v_\ell v_j\in E(K)\}|.
\]
Therefore the number of such middle vertices is either $0$ or $1$.
\end{proof}

The previous lemma is the local witness form of the exact theory.  In the
two-colouring language used in the diamond-condition formulation, one colours
the edges coming from one factor blue and the edges coming from the other
factor red.  Then an entry of $A(H)A(K)$ counts blue-red alternating two-step
walks.  Exact factorization requires these alternating witnesses to occur
exactly where the target adjacency matrix has a $1$.  The next observation is the approximate version of the same statement.

Before relating the product to the colouring or diamond-condition viewpoint, we
fix a convention about ordered matrix entries.  All graphs in this paper are
simple and undirected.  Thus, if $v_iv_j\in E(G)$, then both adjacency-matrix
entries
\[
  A(G)_{ij}=1
  \qquad\text{and}\qquad
  A(G)_{ji}=1
\]
are present.  
No orientation of $G$ is being introduced.
We write
\[
  \mathcal A(G)\coloneqq \{(i,j)\in[n]\times[n]: A(G)_{ij}=1\}
  =
  \{(i,j): i\neq j,\ v_iv_j\in E(G)\}
\]
for the set of ordered adjacency entries of $G$.  Thus every undirected edge
$v_iv_j$ contributes the two ordered entries $(i,j)$ and $(j,i)$ to
$\mathcal A(G)$.  We also write
\[
  \mathcal Z(G):=\{(i,j)\in[n]\times[n]: A(G)_{ij}=0\}
\]
for the set of zero entries of the adjacency matrix.  
This set includes the diagonal entries.

The reason ordered entries matter is that the product $A(H)A(K)$ need not be
symmetric, even though $A(H)$ and $A(K)$ are symmetric.  
Therefore equality with $A(G)$ must be checked separately at the two entries $(i,j)$ and $(j,i)$.
Similarly, when we write
\[
  v_i \xrightarrow{H} v_\ell \xrightarrow{K} v_j,
\]
the arrows do not mean that the edges of $H$ or $K$ are oriented.  They only
record the order of multiplication in $A(H)A(K)$: first one uses an edge of
$H$, and then one uses an edge of $K$.  If an edge belongs to both $H$ and $K$,
then it may be used in either role.

\begin{lem}
\label{lem:approx-diamond}
Let $(H,K,E)$ be an $\eps$-factorization of $G$.
Then the following holds:
\begin{enumerate}
  \item if $(i,j)\in \Good\cap\mathcal A(G)$, then there is exactly one
  $H$-then-$K$ path from $v_i$ to $v_j$;
  \item if $(i,j)\in \Good\cap\mathcal Z(G)$, then there is no $H$-then-$K$ path from $v_i$ to $v_j$.
\end{enumerate}
Thus the entrywise witness condition, which is the matrix-product version of
the diamond condition in the exact theory, is allowed to fail only on the
entries marked by $E$.
\end{lem}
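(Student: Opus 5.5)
This is a direct corollary of \Cref{lem:unique-witness}, so the plan is to make precise what an ``$H$-then-$K$ path from $v_i$ to $v_j$'' means and to translate the entry count into a statement about such paths. By an $H$-then-$K$ path I mean a two-step walk
\[
  v_i \xrightarrow{H} v_\ell \xrightarrow{K} v_j,
\]
that is, a middle vertex $v_\ell$ with $v_iv_\ell\in E(H)$ and $v_\ell v_j\in E(K)$. Such walks are determined by the choice of $\ell$, so they are in bijection with
\[
  N_H(v_i)\cap N_K(v_j).
\]
By the computation in the proof of \Cref{lem:hadamard-form}, the number of them equals $(A(H)A(K))_{ij}$. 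I will state this bijection first.

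Next, fix $(i,j)\in\Good$. By \Cref{lem:unique-witness},
\[
  |N_H(v_i)\cap N_K(v_j)|=A(G)_{ij}.
\]
I then split into the two cases of the statement.

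\begin{enumerate}
\item If $(i,j)\in\mathcal A(G)$, then $A(G)_{ij}=1$. So there is exactly one middle vertex, and hence exactly one $H$-then-$K$ path from $v_i$ to $v_j$.
\item If $(i,j)\in\mathcal Z(G)$, then $A(G)_{ij}=0$. So there are no middle vertices and no such path. This case includes the diagonal pairs $i=j$, where the conclusion says there is no closed $H$-then-$K$ walk $v_i\to v_\ell\to v_i$.
\end{enumerate}

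The closing remark, that the witness condition fails only on entries marked by $E$, is just the contrapositive. If an entry violates the condition, it is not in $\Good$, so $E_{ij}=1$.

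There is no real obstacle here. The only point needing care is terminology. An ``$H$-then-$K$ path'' is really a two-step walk, and it may be a closed walk when $i=j$. Also, an edge lying in both $H$ and $K$ is allowed to play either role. The counting identity $(A(H)A(K))_{ij}=|N_H(v_i)\cap N_K(v_j)|$ already handles all of this correctly, so I will make the convention explicit and then invoke \Cref{lem:unique-witness}.
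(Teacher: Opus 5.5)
Your proposal is correct and matches the paper's argument: the $H$-then-$K$ witnesses from $v_i$ to $v_j$ are counted by $(A(H)A(K))_{ij}=|N_H(v_i)\cap N_K(v_j)|$, which equals $A(G)_{ij}\in\{0,1\}$ on $\Good$. The only difference is that you cite Lemma~\ref{lem:unique-witness} where the paper re-derives the count and applies the factorization condition directly. Your remark that a ``path'' here is a two-step walk, closed when $i=j$, is an accurate clarification.
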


\begin{proof}
For every ordered pair $(i,j)$, the number of $H$-then-$K$ witnesses from
$v_i$ to $v_j$ is
\[
  |\{v_\ell: v_iv_\ell\in E(H),\ v_\ell v_j\in E(K)\}|
  =
  \sum_{\ell=1}^n A(H)_{i\ell}A(K)_{\ell j}
  =
  (A(H)A(K))_{ij}.
\]
Since $(H,K,E)$ is an $\eps$-factorization, we have
\[
  (A(H)A(K))_{ij}=A(G)_{ij}
\]
for every $(i,j)\in\Good$.  If $(i,j)\in\mathcal A(G)$, then
$A(G)_{ij}=1$, so there is exactly one witness.  If
$(i,j)\in\mathcal Z(G)$, then $A(G)_{ij}=0$, so there is no witness.  Hence
all possible failures are confined to the marked entries.
\end{proof}

\begin{thm}
\label{thm:biclique-localized}
Suppose $G$ admits an $\eps$-factorization via $(H,K,E)$.  For each $w\in V(G)$, define the witness set
\[
  S_w:=N_H(w)\times N_K(w)\subseteq V(G)\times V(G).
\]
For an ordered pair $(u,v)\in V(G)\times V(G)$, define
$m(u,v):= |\{w\in V(G):(u,v)\in S_w\}|$.
Then the following hold:
\begin{enumerate}
  \item For every non-error entry $(u,v)\in\Good$, $m(u,v)=A(G)_{uv}$.
  \item If $U:=\bigcup_{w\in V(G)} S_w$,
  then $|U\triangle \mathcal A(G)|\le \eps n^2$.

  \item $\{(u,v)\in V(G)\times V(G):m(u,v)\ge 2\}\subseteq \supp(E)$.
  \item If $\eps=0$, then $\mathcal A(G)=\bigsqcup_{w\in V(G)} S_w$.
\end{enumerate}
\end{thm}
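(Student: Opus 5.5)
The whole argument rests on one identity: for every ordered pair $(u,v)$,
\[
  m(u,v)=|N_H(u)\cap N_K(v)|=(A(H)A(K))_{uv}.
\]
This holds because $(u,v)\in S_w$ means exactly $w\in N_H(u)$ and $w\in N_K(v)$. Here $H$ and $K$ are undirected, so $u\in N_H(w)$ iff $w\in N_H(u)$, and likewise for $K$. I would state this first, and the four items then follow in order.

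For item 1, I combine the identity with \Cref{lem:unique-witness}. For $(u,v)\in\Good$ we get $m(u,v)=(A(H)A(K))_{uv}=A(G)_{uv}$.

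For item 2, note that $U=\{(u,v):m(u,v)\ge1\}$. I claim $U\triangle\mathcal A(G)\subseteq\supp(E)$. Take a pair in the symmetric difference. If it lies in $U\setminus\mathcal A(G)$, then $m\ge1$ while $A(G)_{uv}=0$. If it lies in $\mathcal A(G)\setminus U$, then $m=0$ while $A(G)_{uv}=1$. In either case $m(u,v)\ne A(G)_{uv}$, so by item 1 the pair is not in $\Good$, i.e.\ $E_{uv}=1$. Hence $|U\triangle\mathcal A(G)|\le\|E\|_0\le\eps n^2$.

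For item 3, the same reasoning applies. If $m(u,v)\ge2$, then $m(u,v)\ne A(G)_{uv}\in\{0,1\}$, so $(u,v)\notin\Good$.

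For item 4, suppose $\eps=0$. Then $\|E\|_0\le 0$ forces $E=0$, so every pair is in $\Good$. Item 1 then gives $m=A(G)$ everywhere. This says three things: $m\le1$, so the sets $S_w$ are pairwise disjoint; $U=\mathcal A(G)$; and together these give $\mathcal A(G)=\bigsqcup_w S_w$.

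There is no real obstacle here. The only point needing care is the identity $m(u,v)=(A(H)A(K))_{uv}$, where one must use symmetry of the neighbourhoods to translate $u\in N_H(w)$ into $w\in N_H(u)$, and keep the order $H$-then-$K$ consistent with the matrix product. Diagonal pairs also need no special treatment: $A(G)_{uu}=0$ is already covered by item 1.
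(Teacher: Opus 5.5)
Your proposal is correct and follows essentially the same route as the paper: first establish $m(u,v)=|N_H(u)\cap N_K(v)|=(A(H)A(K))_{uv}$ using the symmetry of undirected neighbourhoods, then read off all four items from agreement on $\Good$ and the inclusion $U\triangle\mathcal A(G)\subseteq\supp(E)$. The only cosmetic difference is in item 2, where the paper states the equivalence $(u,v)\in U\Leftrightarrow(u,v)\in\mathcal A(G)$ for pairs outside $\supp(E)$, while you argue the contrapositive.
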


\begin{proof}
For every ordered pair $(u,v)\in V(G)\times V(G)$, we have the following:
\[
  (A(H)A(K))_{uv}
  =
  \sum_{w\in V(G)} A(H)_{uw}A(K)_{wv}.
\]
Since $H$ and $K$ are undirected,
we have $A(H)_{uw}=1$ if and only if $u\in N_H(w)$,
and moreover  $A(K)_{wv}=1$ if and only if $v\in N_K(w)$.
Therefore we obtain
\[
  (A(H)A(K))_{uv}
  =
  |\{w\in V(G):u\in N_H(w),\ v\in N_K(w)\}|
  =
  m(u,v).
\]

Because $(H,K,E)$ is an $\eps$-factorization, we have
$(A(H)A(K))_{uv}=A(G)_{uv}$ for every non-error entry $(u,v)\in\Good$.  
Hence $m(u,v)=A(G)_{uv}$ for every $(u,v)\in\Good$.  
This proves item~(1).

Now let $U=\bigcup_{w\in V(G)} S_w$.
It follows from the definition that an ordered pair $(u,v)$ belongs to $U$ if and only if $m(u,v)\ge 1$.  
On the other hand, outside the error set, we have
$m(u,v)=A(G)_{uv}\in\{0,1\}$.
Thus, for every $(u,v)\notin\supp(E)$,
\[
  (u,v)\in U
  \quad\Longleftrightarrow\quad
  m(u,v)=1
  \quad\Longleftrightarrow\quad
  A(G)_{uv}=1
  \quad\Longleftrightarrow\quad
  (u,v)\in\mathcal A(G).
\]
Therefore we obtain $U\triangle \mathcal A(G)\subseteq \supp(E)$.
Since $\|E\|_0\le \eps n^2$, it follows that[
$|U\triangle \mathcal A(G)|
  \le
  |\supp(E)|
  =
  \|E\|_0
  \le
  \eps n^2$.
This proves item~(2).

Next, if $(u,v)\notin\supp(E)$, then $(u,v)\in\Good$, and so $m(u,v)=A(G)_{uv}\in\{0,1\}$.
Hence no non-error entry can lie in two or more of the witness sets.  
Therefore we have 
\[
  \{(u,v)\in V(G)\times V(G):m(u,v)\ge 2\}\subseteq \supp(E),
\]
which proves item~(3).

Finally, suppose $\eps=0$.  Then $\|E\|_0=0$, so $\supp(E)=\emptyset$.  By
item~(2), we have $U=\mathcal A(G)$, and by item~(3), no ordered pair lies in more than one witness set.  
Hence the union is disjoint, and we obtain
\[
  \mathcal A(G)
  =
  \bigsqcup_{w\in V(G)} S_w
  =
  \bigsqcup_{w\in V(G)} N_H(w)\times N_K(w).
\]
This proves item~(4).
\end{proof}

The next estimates provide two elementary constraints on possible factors.  They
are useful both as obstructions and as diagnostics for candidate
factorizations.

\begin{lem}
\label{lem:walk-count}
\label{lem:degree-product-error}
Suppose $A(G)$ admits an $\eps$-factorization via $(H,K,E)$, and put $\Delta_*:=\min\{\Delta(H),\Delta(K)\}$.
Then
\[
  \left|
  2e(G)-\sum_{w=1}^n d_H(w)d_K(w)
  \right|
  \le
  (1+\Delta_*)\|E\|_0
  \le
  (1+\Delta_*)\eps n^2.
\]
In particular, if $\eps=0$, then $2e(G)=\sum_{w=1}^n d_H(w)d_K(w)$.
\end{lem}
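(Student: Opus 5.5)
The plan is to sum all entries of both matrices in the matrix form of \Cref{lem:hadamard-form} and compare the two totals entry by entry. Fix an ordering, write $P:=A(H)A(K)$, and let $\mathbf 1$ denote the all-ones vector.

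First, compute the two totals. Since $A(H)$ is symmetric, $\mathbf 1^{\top}A(H)$ is the row vector of $H$-degrees, and $A(K)\mathbf 1$ is the column vector of $K$-degrees. Hence
\[
  \sum_{i,j}P_{ij}=\mathbf 1^{\top}A(H)A(K)\mathbf 1=\sum_{w=1}^n d_H(w)d_K(w).
\]
Equivalently, in the witness language of \Cref{thm:biclique-localized}, $\sum_{u,v} m(u,v)=\sum_w |S_w|$. On the other side, $\sum_{i,j}A(G)_{ij}=2e(G)$, because every undirected edge contributes two ordered entries to $\mathcal A(G)$.

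Second, split the difference over entries:
\[
  2e(G)-\sum_{w=1}^n d_H(w)d_K(w)=\sum_{i,j}\bigl(A(G)_{ij}-P_{ij}\bigr).
\]
By \Cref{lem:hadamard-form}, every term with $(i,j)\in\Good$ vanishes. Therefore only the $\|E\|_0$ error entries survive, and it suffices to bound each surviving term $|P_{ij}-A(G)_{ij}|$ by $1+\Delta_*$.

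This per-entry bound is the only real step, and it is straightforward. We have
\[
  P_{ij}=|N_H(v_i)\cap N_K(v_j)|\le\min\{d_H(v_i),d_K(v_j)\}\le\min\{\Delta(H),\Delta(K)\}=\Delta_*.
\]
Since both $P_{ij}\ge 0$ and $A(G)_{ij}\in\{0,1\}$ are nonnegative, their difference satisfies
\[
  |P_{ij}-A(G)_{ij}|\le\max\{P_{ij},A(G)_{ij}\}\le\max\{\Delta_*,1\}\le 1+\Delta_*.
\]
Summing over the error entries and using the triangle inequality gives the first inequality. The second inequality follows from $\|E\|_0\le\eps n^2$. When $\eps=0$ we have $E=0$, so the difference is $0$, which is the stated identity $2e(G)=\sum_{w=1}^n d_H(w)d_K(w)$.
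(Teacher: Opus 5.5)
Your proposal is correct and follows the paper's proof essentially step for step. You compute both entry totals ($\mathbf 1^{\top}A(H)A(K)\mathbf 1=\sum_w d_H(w)d_K(w)$ is the same exchange of summation the paper uses), note that only error entries contribute to the difference, and bound each such entry via $P_{ij}\le\min\{d_H(v_i),d_K(v_j)\}\le\Delta_*$; your intermediate bound $\max\{\Delta_*,1\}$ is even slightly sharper than the stated $1+\Delta_*$.
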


\begin{proof}
First, expanding the matrix product and exchanging the order of summation gives
\[
\begin{aligned}
\sum_{i,j}(A(H)A(K))_{ij}
&=
\sum_{i,j}\sum_{w=1}^n A(H)_{iw}A(K)_{wj}  \\
&=
\sum_{w=1}^n
\left(\sum_i A(H)_{iw}\right)
\left(\sum_j A(K)_{wj}\right)  \\
&=
\sum_{w=1}^n d_H(w)d_K(w).
\end{aligned}
\]
Now let $B:=A(H)A(K)$.
Since $(H,K,E)$ is an $\eps$-factorization of $G$, we have
$B_{ij}=A(G)_{ij}$ for every non-error entry, that is, for every $(i,j)$ with $E_{ij}=0$.
Therefore
\[
  \left|
  \sum_{i,j}A(G)_{ij}-\sum_{i,j}B_{ij}
  \right|
  \le
  \sum_{(i,j):E_{ij}=1}|A(G)_{ij}-B_{ij}|.
\]
Because $G$ is simple, we obtain $\sum_{i,j}A(G)_{ij}=2e(G)$.
Also, by the counting identity proved above,
\[
  \sum_{i,j}B_{ij}
  =
  \sum_{w=1}^n d_H(w)d_K(w).
\]

It remains to bound the contribution of each error entry.  For every ordered pair $(i,j)$,
\[
  B_{ij}
  =
  |N_H(i)\cap N_K(j)|
  \le
  \min\{d_H(i),d_K(j)\}
  \le
  \Delta_*.
\]
Also $A(G)_{ij}\le 1$.  
Hence we have $|A(G)_{ij}-B_{ij}|\le 1+\Delta_*$
for every error entry $(i,j)$.  
Thus we obtain
\[
  \left|
  2e(G)-\sum_{w=1}^n d_H(w)d_K(w)
  \right|
  \le
  (1+\Delta_*)\|E\|_0.
\]
Since $\|E\|_0\le \eps n^2$, we obtain
\[
  \left|
  2e(G)-\sum_{w=1}^n d_H(w)d_K(w)
  \right|
  \le
  (1+\Delta_*)\eps n^2.
\]
If $\eps=0$, then $\|E\|_0=0$, so the displayed inequality gives $2e(G)=\sum_{w=1}^n d_H(w)d_K(w)$.
\end{proof}

\begin{lem}
\label{lem:row-support}
For simple graphs $H,K$ on $[n]$ and every vertex $i$,
\[
  |\{j:(A(H)A(K))_{ij}>0\}|
  \le
  \sum_{w\in N_H(i)}d_K(w)
  \le
  d_H(i)\Delta(K).
\]
\end{lem}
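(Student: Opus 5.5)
The plan is to describe the support of row $i$ of $A(H)A(K)$ explicitly as a union of neighbourhoods and then bound the size of that union. As in the proof of \Cref{thm:biclique-localized}, I first expand
\[
  (A(H)A(K))_{ij}=\sum_{w=1}^n A(H)_{iw}A(K)_{wj}=|N_H(i)\cap N_K(j)|.
\]
Since every summand is a nonnegative integer, this entry is positive exactly when some $w$ has $A(H)_{iw}=1$ and $A(K)_{wj}=1$. That is, $w\in N_H(i)$ and $j\in N_K(w)$, using the symmetry of $A(K)$.

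This gives the set identity
\[
  \{j:(A(H)A(K))_{ij}>0\}=\bigcup_{w\in N_H(i)}N_K(w).
\]
The union bound then yields
\[
  \Bigl|\bigcup_{w\in N_H(i)}N_K(w)\Bigr|\le\sum_{w\in N_H(i)}|N_K(w)|=\sum_{w\in N_H(i)}d_K(w),
\]
which is the first inequality.

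For the second inequality, I bound each term by $d_K(w)\le\Delta(K)$. The sum has exactly $|N_H(i)|=d_H(i)$ terms, so it is at most $d_H(i)\Delta(K)$.

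There is no real obstacle here. The only point needing care is justifying the set identity in both directions, which follows from the nonnegativity of the summands. The inequality is not tight in general, because the neighbourhoods $N_K(w)$ for $w\in N_H(i)$ may overlap; such overlaps are exactly the entries larger than $1$.
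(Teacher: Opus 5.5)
Your proposal is correct and follows essentially the same route as the paper: the row support of $A(H)A(K)$ lies in $\bigcup_{w\in N_H(i)}N_K(w)$, the union bound gives $\sum_{w\in N_H(i)}d_K(w)$, and bounding each term by $\Delta(K)$ gives $d_H(i)\Delta(K)$. The paper only uses the inclusion $\subseteq$, so proving the full set identity is correct but not needed.
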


\begin{proof}
If $(A(H)A(K))_{ij}>0$, then there exists $w\in N_H(i)$ such that
$j\in N_K(w)$.  Thus every such $j$ lies in
\[
  \bigcup_{w\in N_H(i)}N_K(w),
\]
whose size is at most $\sum_{w\in N_H(i)}d_K(w)$ and hence at most
$d_H(i)\Delta(K)$.
\end{proof}

\begin{thm}
\label{thm:degree-tradeoff}
Let $G$ have $m$ edges with an $\eps$-factorization via
$(H,K,E)$, then
\[
  \eps
  \ge
  \max\left\{
  0,\,
  \frac{2m-n\Delta(H)\Delta(K)}{n^2}
  \right\}.
\]
\end{thm}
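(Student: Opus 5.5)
The bound follows from a counting argument on the positive entries of $B:=A(H)A(K)$, using Lemma~\ref{lem:row-support}. Since $\eps\ge 0$ trivially, it suffices to show
\[
  \|E\|_0 \ \ge\ 2m - n\Delta(H)\Delta(K).
\]
Dividing by $n^2$ and using $\|E\|_0\le \eps n^2$ then gives the claim.

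\emph{Step 1: non-error adjacency entries are positive entries of $B$.} Let $\mathcal A(G)$ be the set of ordered adjacency entries, so that $|\mathcal A(G)|=2m$. By Lemma~\ref{lem:approx-diamond} (or Lemma~\ref{lem:hadamard-form}), every $(i,j)\in\mathcal A(G)\cap\Good$ satisfies $B_{ij}=A(G)_{ij}=1>0$. Hence
\[
  \mathcal A(G)\setminus\supp(E)\subseteq P:=\{(i,j):B_{ij}>0\}.
\]

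\emph{Step 2: bound $|P|$.} Summing Lemma~\ref{lem:row-support} over all rows $i$ gives
\[
  |P|\le \sum_i d_H(i)\Delta(K)\le n\Delta(H)\Delta(K).
\]
The sharper intermediate bound $\sum_i d_H(i)\Delta(K)=2e(H)\Delta(K)$ is also available, but the stated form only needs the cruder one.

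\emph{Step 3: combine.} From Steps 1 and 2,
\[
  2m - |\supp(E)| \ \le\ |\mathcal A(G)\setminus \supp(E)| \ \le\ |P| \ \le\ n\Delta(H)\Delta(K).
\]
Rearranging gives $\|E\|_0\ge 2m-n\Delta(H)\Delta(K)$, and dividing by $n^2$ completes the argument.

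The proof is routine, and the only step needing care is Step 1. It matters that only the \emph{support} of $B$ is counted, not its entries, since entries of $B$ may exceed $1$ on error positions. The inclusion into $P$ is what decouples the argument from the actual values of $B$.
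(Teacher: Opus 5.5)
Your proof is correct and follows the paper's argument: you bound the number of positive entries of $A(H)A(K)$ by $n\Delta(H)\Delta(K)$ via Lemma~\ref{lem:row-support}, note that every non-error adjacency entry must be one of them, and conclude $\|E\|_0\ge 2m-n\Delta(H)\Delta(K)$. Your side remark that the sharper bound $2e(H)\Delta(K)$ is also available is valid.
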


\begin{proof}
By \Cref{lem:row-support}, each row of $A(H)A(K)$ has at most
$\Delta(H)\Delta(K)$ positive entries.  Hence the number of ordered pairs
$(i,j)$ with $(A(H)A(K))_{ij}>0$ is at most
$n\Delta(H)\Delta(K)$.  The matrix $A(G)$ has exactly $2m$ off-diagonal ones.
Outside the error set, each one of these must occur at a positive entry of $A(H)A(K)$.  
Therefore we obtain  $2m-\|E\|_0\le n\Delta(H)\Delta(K)$.
Thus we have  $\|E\|_0\ge 2m-n\Delta(H)\Delta(K)$.
If the right-hand side is positive, then
$\eps n^2\ge \|E\|_0$ gives the displayed bound.  If it is non-positive, the bound reduces to the trivial inequality $\eps\ge 0$.
\end{proof}

\section{Complete Graphs}
\label{sec:complete}

Complete graphs are a basic benchmark for the theory.  In the exact case, they have already been classified by \citet{maghsoudi2023matrix}. 
We use that classification as a zero-error base case and ask how close the remaining complete graphs are to being factorizable.

\begin{lem}{\rm\cite[Theorem 1]{maghsoudi2023matrix}}
\label{thm:mms-complete}
The complete graph $K_n$ admits an exact matrix product factorization if and only if $n\equiv 1\pmod 4$.
\end{lem}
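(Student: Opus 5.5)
Since this lemma is quoted verbatim from \cite[Theorem 1]{maghsoudi2023matrix}, the shortest route is simply to invoke that result. If I had to reprove it, I would argue necessity and sufficiency separately, with necessity resting mainly on linear-algebraic constraints on the factors. The case $n=1$ is trivial, since the empty graphs factor $K_1$, so assume $n\ge 2$ and suppose $A(H)A(K)=J-I$.

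\emph{Step 1: regularity.} Because $J-I$ is symmetric, the factors commute: $A(H)A(K)=A(K)A(H)$. Hence
\[
A(H)(J-I)=A(H)^2A(K)=A(H)A(K)A(H)=(J-I)A(H),
\]
so $A(H)$ commutes with $J$, and therefore $H$ is $h$-regular for some $h$. Similarly $K$ is $k$-regular. Comparing row sums gives $hk=n-1$, which is also the $\eps=0$ case of \Cref{lem:walk-count}. In addition, \Cref{thm:biclique-localized}(4) gives $N_H(w)\cap N_K(w)=\emptyset$ for every $w$, because diagonal pairs are not covered. It follows that $H$ and $K$ are edge-disjoint: if $uw$ were an edge of both, the pair $(u,u)$ would receive a witness.

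\emph{Step 2: arithmetic obstruction.} I would try to extract the congruence from $\det A(H)\det A(K)=\det(J-I)=(-1)^{n-1}(n-1)$.
\begin{itemize}
\item For even $n$, a symmetric zero-diagonal integer matrix is congruent modulo $2$ to an alternating matrix. I would prove the refinement that its determinant is $\equiv (-1)^{n/2}\,\mathrm{Pf}^2 \pmod 4$, for a suitable Pfaffian of a skew lift. Both determinants are odd, since their product $-(n-1)$ is odd. The product of two such determinants is then $\equiv 1\pmod 4$, forcing $-(n-1)\equiv 1\pmod 4$ and hence $n\equiv 0\pmod 4$.
\item The remaining work is to eliminate $n\equiv 0\pmod 4$ and $n\equiv 3\pmod 4$.
\end{itemize}
For this I would use the spectrum. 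The commuting regular factors are simultaneously diagonalizable with common eigenvector $\mathbf 1$ (eigenvalues $h,k$). On $\mathbf 1^\perp$ the paired eigenvalues satisfy $\lambda\mu=-1$. Combining this with the trace identities
\[
h+\sum\lambda_i=0,\qquad h^2+\sum\lambda_i^2=nh,
\]
and the analogous identities for $K$ with $\mu_i=-1/\lambda_i$, should yield a parity constraint on $h$ and $k$ incompatible with $n\not\equiv1\pmod4$.

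\emph{Step 3: sufficiency.} For $n=4t+1$ I would give an explicit construction, for example a circulant or Cayley-type one on $\mathbb Z_n$, with connection sets $S,T$ such that every nonzero residue is uniquely expressible as $s+t$ with $s\in S$, $t\in T$, and $0$ is not expressible at all. This uses the witness decomposition of \Cref{thm:biclique-localized}(4), and the sets $S,T$ must be symmetric so that $H$ and $K$ are undirected graphs.

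The main obstacle is Step 2. The mod-$4$ determinant lemma for even $n$, and especially ruling out $n\equiv 3$ and $n\equiv 0\pmod 4$ via the spectral and trace constraints, are where I am least sure that the outlined identities suffice. If they do not close the argument, I would fall back on citing \cite{maghsoudi2023matrix} directly.
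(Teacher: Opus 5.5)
The paper gives no proof of this lemma. It is imported verbatim from \cite[Theorem 1]{maghsoudi2023matrix}, so your opening suggestion to invoke that theorem is exactly the paper's route. The only related material in the paper is the cyclic construction used later for sufficiency. For $N=4t+1$ it takes $\Cay(\mathbb Z_N,\{\pm1\})$ and $\Cay(\mathbb Z_N,T_N)$ with $T_N=\{4q+2,4q+3:0\le q\le t-1\}$. Since $T_N-1$ and $T_N+1$ partition $\{1,\dots,4t\}$, this supplies the explicit connection sets that your Step~3 leaves unspecified. Your Step~1 (commutation, regularity, $hk=n-1$, edge-disjointness) is correct.

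As a self-contained reproof, however, Step~2 has a genuine gap at $n\equiv 0\pmod 4$. The case $n\equiv 3\pmod 4$ is easy. With $n$ odd, the handshake identities $nh=2e(H)$ and $nk=2e(K)$ force $h,k$ even, so $4\mid hk=n-1$. Your mod-$4$ determinant congruence correctly disposes of $n\equiv 2\pmod 4$. But for $n\equiv 0\pmod 4$ none of your tools excludes anything:
\begin{itemize}
\item the determinant test is passed automatically, since your own computation ends with ``$n\equiv 0\pmod 4$'';
\item $hk=n-1$ merely forces $h,k$ odd;
\item regularity, edge-disjointness and handshake parity are all consistent (e.g.\ $n=16$, $\{h,k\}=\{3,5\}$).
\end{itemize}
The trace identities you propose to add, such as $h+\sum\lambda_i=0$ and $h^2+\sum\lambda_i^2=nh$ together with $\mu_i=-1/\lambda_i$, are relations among real numbers. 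They give no handle on the parity of $h$ and $k$, so the plan as outlined cannot rule out this case.

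The missing idea is local. Fix a vertex $x$ and take any $u\in N_K(x)$. Since $u\neq x$, the exact equation at the entry $(u,x)$ gives $|N_H(u)\cap N_K(x)|=1$. So $u$ has exactly one $H$-neighbour inside $N_K(x)$. Hence $H[N_K(x)]$ is a perfect matching and $d_K(x)$ is even. In matrix form, $(A(K)A(H)A(K))_{xx}=(A(K)(J-I))_{xx}=d_K(x)$, while the left side counts ordered $H$-edges inside $N_K(x)$.

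Symmetrically, use the entries $(x,v)$ with $v\in N_H(x)$, or $(A(H)A(K)A(H))_{xx}=((J-I)A(H))_{xx}=d_H(x)$. This shows $K[N_H(x)]$ is a perfect matching and $d_H(x)$ is even. Combined with $hk=n-1$ from your Step~1, this gives $4\mid n-1$ for every $n$ at once. The Pfaffian/determinant lemma and the spectral case analysis then become unnecessary.
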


Here we have an example.
Let $J_m$ denote the all-ones matrix of order $m$, and let $R_m$ denote the backward identity matrix of order $m$. Define
\[
A(H)=J_2\otimes R_{2n},\qquad A(K)=R_2\otimes J_{2n}.
\]
Then, by the mixed-product property of the Kronecker product,
\[
A(H)A(K)
=(J_2R_2)\otimes (R_{2n}J_{2n})
=J_2\otimes J_{2n}
=J_{4n}.
\]

\begin{defn}
Let $\Gamma$ be a finite group with identity element $e$, and let
$S\subseteq \Gamma\sm  \{e\}$ with $S=S^{-1}$.  The Cayley graph
$\Cay(\Gamma,S)$ is the graph with vertex set $\Gamma$ and edge set
\[
  E(\Cay(\Gamma,S))
  =
  \{\{x,y\}:x,y\in\Gamma,\ x\neq y,\ x^{-1}y\in S\}.
\]
When $\Gamma=\mathbb Z_N$ is written additively, this condition becomes
$y-x\in S$.  Thus $S=-S$ makes the graph undirected and $0\notin S$ makes it
loopless.
\end{defn}

For the approximate result we use the explicit cyclic construction behind the
known exact theorem.  When $N=4t+1$, work on $\mathbb Z_N$ and put
\[
  S=\{-1,1\},
  \qquad
  T=\{4q+2,4q+3:q=0,1,\dots,t-1\}.
\]
The Cayley graphs $\Cay(\mathbb Z_N,S)$ and $\Cay(\mathbb Z_N,T)$ give an exact factorization of $K_N$.  
Moreover, each witness contributes exactly $|S||T|=N-1$ ordered pairs.

\begin{thm}
\label{cor:complete-improved-eps}
For every $n\ge 2$, the complete graph $K_n$ admits an
$\eps$-factorization with
\[
\eps \le
\begin{cases}
0, & n\equiv 1 \pmod 4,\\[2mm]
\dfrac{1}{n}, & n\equiv 0 \pmod 4,\\[3mm]
\dfrac{1}{n}, & n\equiv 2 \pmod 4,\\[3mm]
\dfrac{n+1}{n^2}, & n\equiv 3 \pmod 4.
\end{cases}
\]
\end{thm}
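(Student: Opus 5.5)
The case $n\equiv 1\pmod 4$ is immediate from \Cref{thm:mms-complete}: an exact factorization exists, so $\eps=0$. For the remaining classes I would not treat $n\equiv 0$ and $n\equiv 2\pmod 4$ separately. Instead I would use one construction valid for every even $n$, and then handle $n\equiv 3\pmod 4$ by deleting two well-chosen vertices from the cyclic exact factorization of $K_{n+2}$.

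\emph{Even $n$.} Let $P$ be the permutation matrix of a fixed-point-free involution of $[n]$, that is, the adjacency matrix of a perfect matching $H$. Then $P$ is symmetric with zero diagonal and $P^2=I$. Put
\[
  A(K):=J-I-P,
\]
which is symmetric, $0$-$1$ and has zero diagonal because $P$ has zero diagonal. So $K$ is the complement of the matching and is a simple graph. Using $PJ=J$, we get
\[
  A(H)A(K)=J-P-I=A(K_n)-P.
\]
Hence $A(H)A(K)$ and $A(K_n)$ differ exactly on the $n$ entries where $P$ is $1$. Taking $E=P$ in \Cref{lem:hadamard-form} gives $\|E\|_0=n$, so $\eps\le n/n^2=1/n$. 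This covers $n\equiv 0$ and $n\equiv 2\pmod 4$.

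\emph{$n\equiv 3\pmod 4$.} Put $N=n+2\equiv 1\pmod 4$ and use the cyclic exact factorization of $K_N$ with $H=\Cay(\mathbb Z_N,S)$, $S=\{\pm1\}$, and $K=\Cay(\mathbb Z_N,T)$. I would delete two adjacent vertices $x$ and $y=x+1$ and restrict $H,K$ to the remaining $n$ vertices.

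Deleting vertices only removes two-step walks and never creates them. So every ordered pair among the surviving vertices that had no witness still has none, including the diagonal. A pair with witness $1$ keeps it unless its unique witness was $x$ or $y$. By \Cref{thm:biclique-localized}(4), the error set is therefore contained in the surviving part of $S_x\cup S_y$, where $S_w=N_H(w)\times N_K(w)$.

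The key point, and the step needing care, is the count. Since $N_H(x)=\{x-1,x+1\}$ and $x+1=y$ is deleted, the surviving part of $S_x$ has first coordinate $x-1$ only. Its size is at most $|N_K(x)|=|T|=(N-1)/2=(n+1)/2$. Similarly $N_H(y)=\{x,x+2\}$ with $x$ deleted, so $S_y$ contributes at most $(n+1)/2$ pairs. The total is at most $n+1$, giving $\eps\le (n+1)/n^2$.

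The main obstacle is this last count: a naive deletion of two arbitrary vertices would cost about $2(N-1)$ errors. The gain comes from choosing $y\in N_H(x)$ so that each deleted vertex kills half of the other's witness set. I would also double-check the edge case $n=3$, i.e. $N=5$ with $t=1$: there the sets are small, but the bound $|T|=(n+1)/2$ per deleted vertex still applies.
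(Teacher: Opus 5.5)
Your proof is correct. For $n\equiv 3\pmod 4$ it is the paper's argument: you delete two $P_N$-adjacent vertices from the cyclic factorization of $K_{n+2}$, and each deleted witness keeps only one $H$-neighbour, so there are at most $2\cdot\frac{N-1}{2}=n+1$ errors.

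For even $n$ your route is genuinely different. The paper stays inside the cyclic construction in both even classes. For $n\equiv 0\pmod 4$ it deletes one vertex from the factorization of $K_{n+1}$. For $n\equiv 2\pmod 4$ it deletes three consecutive vertices from the factorization of $K_{n+3}$, which needs the extra observation $1-(-1)=2\in T_N$ so that the witnesses $-1$ and $1$ each lose a $Q_N$-neighbour.

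Your matching construction avoids all of this. The identity $P(J-I-P)=J-I-P$ gives precisely $n$ errors in one line. It uses only the parity of $n$ and does not depend on the exact classification or the cyclic factorization.

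It is in fact \Cref{thm:blowup-factor} in disguise. $J-I-P$ is the adjacency matrix of the cocktail-party graph $K_{2,2,\dots,2}$, a blow-up of $K_{n/2}$ with all parts of even size. Your product $A(M)A(K_{2,\dots,2})$ is the transpose of the exact factorization $A(K_{2,\dots,2})A(M)=A(K_{2,\dots,2})$ built there, and $K_n$ differs from this exactly factorizable graph only in the $n$ matching entries.

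The paper's approach gains a single mechanism, vertex deletion from a nearby exact factorization, across all three non-exact residue classes. Yours gains a shorter, self-contained even case and ties the complete-graph bounds to the blow-up section.
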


\begin{proof}
The case $n\equiv 1\pmod 4$ is exactly
\Cref{thm:mms-complete}.  Hence we assume $n\not\equiv 1\pmod 4$.

We use the standard cyclic exact factorization of complete graphs of order $N\equiv 1\pmod 4$.  
We write $N=4t+1$ and work on the vertex set $\mathbb Z_N$.  
Let $P_N:=\Cay(\mathbb Z_N,\{-1,1\})$
and let $Q_N:=\Cay(\mathbb Z_N,T_N)$, and $T_N:=\{4q+2,4q+3:q=0,1,\dots,t-1\}$.
Then we have $A(P_N)A(Q_N)=A(K_N)$.
In addition, we have 
\[
  d_{P_N}(w)=2
  \qquad\text{and}\qquad
  d_{Q_N}(w)=\frac{N-1}{2}
\]
for every $w\in\mathbb Z_N$.

Now let $D\subseteq \mathbb Z_N$, and put $W:=\mathbb Z_N\sm   D$.
We delete the vertices of $D$ from both factor graphs $P_N$ and $Q_N$, and denote the resulting induced subgraphs on $W$ by $P_N[W]$ and $Q_N[W]$.
Since $A(P_N)A(Q_N)=A(K_N)$, every ordered pair $(u,v)$ with $u,v\in W$ and $u\neq v$ has a unique witness in the original exact factorization.  
After deleting $D$, the only possible errors are the ordered pairs whose unique witness belonged to $D$.  
Thus the number of errors is at most
\[
  \sum_{w\in D}
  |N_{P_N}(w)\sm   D|\,
  |N_{Q_N}(w)\sm   D|.
\]
We now choose $D$ according to the congruence class of $n$.

\medskip
\noindent
\textbf{Case 1: $n\equiv 0\pmod 4$.}
Let $N:=n+1$.
Then $N\equiv 1\pmod 4$ and we set $D:=\{0\}$.
For the deleted witness $0$, we have $|N_{P_N}(0)\sm   D|=2$ and $|N_{Q_N}(0)\sm   D|=\frac{N-1}{2}$.
Hence the number of errors is at most
\[
  2\cdot \frac{N-1}{2}
  =
  N-1
  =
  n.
\]
Therefore we obtain $\eps\le \frac{n}{n^2}=\frac1n$.

\medskip
\noindent
\textbf{Case 2: $n\equiv 3\pmod 4$.}
Let $N:=n+2$.
Then $N\equiv 1\pmod 4$.  
Take two consecutive vertices in the cycle $P_N$:
\[
  D:=\{0,1\}.
\]
For $w=0$, one of the two $P_N$-neighbours of $0$ is deleted, namely $1$.
Thus we have $|N_{P_N}(0)\sm   D|=1$.
Also $1\notin N_{Q_N}(0)$, since the connection set $T_N$ contains neither
$1$ nor $-1$.  Hence
\[
  |N_{Q_N}(0)\sm   D|=\frac{N-1}{2}.
\]
So the deleted witness $0$ contributes at most $\frac{N-1}{2}$ errors.  
The same argument applies to the deleted witness $1$.  Therefore the total number of errors is at most
\[
  2\cdot \frac{N-1}{2}
  =
  N-1
  =
  n+1.
\]
Hence we have $\eps\le \frac{n+1}{n^2}$.

\medskip
\noindent
\textbf{Case 3: $n\equiv 2\pmod 4$.}
Let $N:=n+3$.
Then $N\equiv 1\pmod 4$.  
Take three consecutive vertices in the cycle $P_N$:
\[
  D:=\{-1,0,1\}\subseteq \mathbb Z_N.
\]
The deleted witness $0$ contributes no errors, because both of its
$P_N$-neighbours are deleted:
\[
  N_{P_N}(0)=\{-1,1\}\subseteq D.
\]
Now consider $w=-1$.  
Its $P_N$-neighbours are $-2$ and $0$, and only $0$ is deleted.  
Therefore we have  $|N_{P_N}(-1)\sm   D|=1$.
Also, the vertex $1$ is a $Q_N$-neighbour of $-1$, because
$1-(-1)=2\in T_N$.
Thus at least one $Q_N$-neighbour of $-1$ is deleted, and so
\[
  |N_{Q_N}(-1)\sm   D|
  \le
  \frac{N-1}{2}-1
  =
  \frac{N-3}{2}.
\]
Hence the deleted witness $-1$ contributes at most
$\frac{N-3}{2}$ errors.  
By symmetry, the deleted witness $1$ contributes at most the same number of errors.  Therefore the total number of errors is at most
\[
  2\cdot \frac{N-3}{2}
  =
  N-3
  =
  n.
\]
Hence we obtain $\eps\le \frac{n}{n^2}=\frac1n$.
Combining the three cases with the exact case $n\equiv 1\pmod 4$ proves the
theorem.
\end{proof}

The point of \Cref{cor:complete-improved-eps} is that the exact congruence
obstruction is a zero-error obstruction only.  Every complete graph is within
$O(1/n)$, in matrix Hamming distance, of a product of two adjacency matrices.
No optimality claim is made for the displayed upper bounds.

\section{Blow-ups and Stability }
\label{sec:blowups}

The complete bipartite construction is a special case of a general template
principle.  If a graph is obtained by replacing each vertex of a fixed graph by
an independent set and each edge by a complete bipartite graph, then matching
inside the parts copies the appropriate columns of the adjacency matrix.  Odd
parts create exactly the unmatched-column errors.

\begin{defn}
Let $F$ be a simple graph on vertex set $[r]$.  A graph $G$ with vertex partition  $V(G)=V_1\sqcup V_2\sqcup\cdots\sqcup V_r$
is a \defin{blow-up} of $F$ if:
\begin{enumerate}
  \item $G[V_a]$ is edgeless for each $a\in[r]$;
  \item for $a\neq b$, the bipartite graph between $V_a$ and $V_b$ is complete
  if $ab\in E(F)$, and empty if $ab\notin E(F)$.
\end{enumerate}
\end{defn}

\begin{thm}
\label{thm:blowup-factor}
Let $F$ be a simple graph on $[r]$, and let $G$ be a blow-up of $F$ with parts $V_1,\dots,V_r$ and $n=|V(G)|$.  
Then $G$ admits an $\eps$-factorization with
\[
  \eps
  \le
  \frac{1}{n^2}
  \sum_{\substack{b\in[r]\\ |V_b|\text{ odd}}}
  \sum_{a\in N_F(b)}|V_a|
  \le
  \frac{r}{n}.
\]
Moreover, if every non-isolated part $V_b$ has even size, then $G$ admits a $0$-factorization.
\end{thm}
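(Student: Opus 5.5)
Take $H$ to be a perfect matching inside each part (a near-perfect matching when the part is odd), and $K:=G$. Then $A(H)A(G)$ permutes rows of $A(G)$ within parts, and rows of $A(G)$ are constant on parts.

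\textbf{Construction.} For each $b\in[r]$, choose a matching $M_b$ inside $V_b$ covering all of $V_b$ if $|V_b|$ is even, and all but one vertex $x_b$ if $|V_b|$ is odd. Let $H:=\bigcup_b M_b$; this is a simple graph because each $G[V_a]$ is edgeless and $H$ consists of disjoint edges. Let $K:=G$.

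\textbf{Computing the product.} For $u\in V_b$ and any $v$,
\[
(A(H)A(K))_{uv}=|N_H(u)\cap N_G(v)|.
\]
Suppose $u$ is matched to $u'\in V_b$. Then this entry equals $\mathbf 1_{\{u'v\in E(G)\}}$. Since $u$ and $u'$ lie in the same part, $u'v\in E(G)$ holds iff $v\in V_a$ for some $a\in N_F(b)$, iff $uv\in E(G)$. So the entry is exactly $A(G)_{uv}$.

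Suppose instead $u=x_b$ is unmatched. Then $N_H(u)=\emptyset$ and row $u$ of the product is zero. It disagrees with $A(G)$ exactly at the columns $v\in\bigcup_{a\in N_F(b)}V_a$, which gives $\sum_{a\in N_F(b)}|V_a|$ errors.

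\textbf{Counting.} Let $\mathcal E$ be the set of these disagreements, $\mathcal E:=\{(x_b,v): |V_b| \text{ odd},\ v\in N_G(x_b)\}$. Then
\[
|\mathcal E|=\sum_{b:\,|V_b|\text{ odd}}\ \sum_{a\in N_F(b)}|V_a|.
\]
Each inner sum is at most $n$, and there are at most $r$ odd parts, so $|\mathcal E|\le rn$ and $\eps\le r/n$.

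If every non-isolated part has even size, then each odd part $V_b$ has $N_F(b)=\emptyset$. Its unmatched vertex $x_b$ is isolated in $G$, so its zero row is correct and contributes no errors. This gives a $0$-factorization.

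Note that asymmetry of the product causes no problem: we check every ordered pair $(u,v)$ directly, with $u$ the row index. The only step needing care is the claim that matching within a part preserves adjacency, and this follows from the twin structure of the blow-up: vertices in the same part have identical neighbourhoods in $G$.
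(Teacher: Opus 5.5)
Your proof is correct and is essentially the paper's construction with the two factors interchanged. The paper takes $H=G$ and $K=\bigcup_b M_b$, so the product copies columns and the errors sit in the columns of unmatched vertices; your choice $H=\bigcup_b M_b$, $K=G$ gives the transposed product $A(H)A(K)=\bigl(A(G)A(H)\bigr)^{\top}$, so the errors sit in the rows of the $x_b$, with the same count $\sum_{b:\,|V_b|\text{ odd}}\sum_{a\in N_F(b)}|V_a|$ and the same exactness criterion.
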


\begin{proof}
Let $H:=G$.  For each part $V_b$, choose a matching $M_b$ on the vertex set
$V_b$ covering all but at most one vertex.  Let $K$ be the graph whose edge set
is
\[
  E(K):=\bigcup_{b=1}^r M_b.
\]
Thus $K$ has edges only inside the parts.
We fix $i\in V_a$ and $j\in V_b$.  
If $j$ is matched in $M_b$, let $j'$ be its match.  
Then the only possible $K$-neighbour of $j$ is $j'$, so
$(A(H)A(K))_{ij}=A(H)_{ij'}$.
Since $j'\in V_b$ and $G$ is a blow-up of $F$, this equals $A(G)_{ij}$.
If $j$ is unmatched, then column $j$ of $A(K)$ is zero, so
$(A(H)A(K))_{ij}=0$ for every $i$.  
Thus mismatches in column $j$ occur exactly for those
$i\in V_a$ with $ab\in E(F)$.  
The number of such mismatches is $\sum_{a\in N_F(b)}|V_a|$.
Summing over odd parts gives the claimed bound.  Since each inner sum is at most $n$, the coarser estimate $\eps\le r/n$ follows.  
If every non-isolated part has even size, then every part that can create mismatches has a perfect matching. 
In fact, unmatched vertices can occur only in isolated parts and create no errors.
\end{proof}

\begin{cor}
\label{cor:complete-multipartite}
Let $G$ be the complete $r$-partite graph with parts $V_1,\dots,V_r$ and total
order $n$.  Then $A(G)$ admits an $\eps$-factorization with
\[
  \|E\|_0
  \le
  \sum_{\substack{b\in[r]\\ |V_b|\text{ odd}}}(n-|V_b|).
\]
In particular, $\eps\le r/n$.  If every part has even size, then $A(G)$ admits
a $0$-factorization.
\end{cor}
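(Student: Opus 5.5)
The plan is to apply \Cref{thm:blowup-factor} with the template $F=K_r$. First I check that the complete $r$-partite graph $G$ with parts $V_1,\dots,V_r$ is a blow-up of $K_r$ in the sense of the definition: each $G[V_a]$ is edgeless, and for $a\neq b$ the bipartite graph between $V_a$ and $V_b$ is complete. Every pair of distinct indices is an edge of $K_r$, so this is exactly the blow-up condition. I assume all parts are nonempty, as is implicit in the term ``$r$-partite''; an empty part would contribute nothing in any case.

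Next I specialise the error count. In the proof of \Cref{thm:blowup-factor} the number of mismatched entries is at most
\[
  \sum_{\substack{b\in[r]\\ |V_b|\text{ odd}}}\ \sum_{a\in N_F(b)}|V_a| .
\]
In $K_r$ we have $N_F(b)=[r]\setminus\{b\}$, so the inner sum is
\[
  \sum_{a\neq b}|V_a| = n-|V_b| .
\]
This gives the stated bound on $\|E\|_0$, taking $E$ to be the indicator matrix of the mismatched entries, as in \Cref{lem:hadamard-form}.

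For the inequality $\eps\le r/n$, each term $n-|V_b|$ is at most $n$, and there are at most $r$ odd parts. Hence $\|E\|_0\le rn$, and so $\|E\|_0/n^2\le r/n$. This is also just the coarse estimate already stated in \Cref{thm:blowup-factor}.

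For the exact case, suppose every part has even size. Then no part has odd order, so the sum above is empty and $\|E\|_0=0$. Equivalently, every non-isolated part is even, so the exactness clause of \Cref{thm:blowup-factor} applies directly.

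There is no real obstacle here. The only point needing care is that the refined count $\sum_{a\in N_F(b)}|V_a|$ is the exact number of mismatches coming from the unmatched column in $V_b$. To justify it, I would use the explicit factors from the proof of \Cref{thm:blowup-factor}: $H=G$, and $K$ a near-perfect matching inside each part.
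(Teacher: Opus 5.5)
Your proposal is correct and matches the paper's proof, which simply notes that a complete $r$-partite graph is a blow-up of $K_r$ and invokes \Cref{thm:blowup-factor}; your computation $\sum_{a\neq b}|V_a|=n-|V_b|$ spells out the specialization the paper leaves implicit. You also need not reopen that theorem's proof to get the bound on $\|E\|_0$: its stated bound on $\eps$ already gives an error matrix with $\|E\|_0\le\eps n^2$, and $\eps n^2$ equals the displayed sum.
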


\begin{proof}
A complete $r$-partite graph is a blow-up of $K_r$.  
We invoke \Cref{thm:blowup-factor}.
\end{proof}

\begin{rem}
The exact even-part complete multipartite case is consistent with the complete
multipartite constructions in the exact theory \cite{maghsoudi2023matrix}.  The
point of \Cref{thm:blowup-factor} is that the same matching-inside-parts
argument gives an immediate error count for arbitrary part parities and for
blow-ups of any fixed template.
\end{rem}

\section{Trees and Sparse Bipartite Graphs}
\label{sec:trees}

The exact theory behaves rigidly on trees: it is known that every tree of order at least two is not exactly factorizable \cite{miraftab2025factorability}.
From the approximate point of view, however, the situation is different.  
Since a tree has $n-1$ undirected edges, marking all directed edge entries already gives $2(n-1)$ errors, hence error parameter $O(1/n)$.  
The next result gives a more structured construction: it factors one orientation of almost all edges exactly.

We first provide the construction for bipartite graphs.

\begin{lem}
\label{lem:bipartite-one-sided}
Let $G$ be a bipartite graph with bipartition $X\cup Y$.  Suppose $U\subseteq Y$ has even cardinality.  
Then there exist simple graphs $H,K$ on $V(G)$ such that, for every ordered pair $(a,b)\in V(G)\times V(G)$,
\[
  (A(H)A(K))_{ab}
  =
  \begin{cases}
  1, & \text{if } a\in X,\ b\in U,\ \text{and } ab\in E(G),\\
  0, & \text{otherwise.}
  \end{cases}
\]
\end{lem}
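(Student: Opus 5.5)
The plan is to mimic the matching trick from the proof of \Cref{thm:blowup-factor}, restricted to the vertices of $U$. Since $|U|$ is even, fix a perfect matching $M$ on the vertex set $U$, pairing each $u\in U$ with a partner $u'\in U$, $u'\neq u$. The edges of $M$ need not be edges of $G$; they are new edges joining two vertices of $Y$. Let $K$ be the graph on $V(G)$ with $E(K)=M$. Then for $b\in U$ we have $N_K(b)=\{b'\}$, and for $b\notin U$ we have $N_K(b)=\emptyset$.

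Next I would define $H$ so that $H$-edges realize $G$-adjacency but shifted through the matching. For each $u\in U$, join every $x\in N_G(u')\subseteq X$ to $u$ in $H$. Equivalently,
\[
  E(H)=\{\,xu: x\in X,\ u\in U,\ xu'\in E(G)\,\}.
\]
This is a simple graph: every edge joins $X$ to $Y$, so there are no loops, and there are no multiple edges since each pair is included at most once.

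The product is then computed columnwise. For $b\notin U$, column $b$ of $A(K)$ is zero, so $(A(H)A(K))_{ab}=0$. For $b\in U$, the only $K$-neighbour of $b$ is $b'$, so
\[
  (A(H)A(K))_{ab}=A(H)_{ab'}.
\]
By construction, this entry equals $1$ exactly when $a\in X$ and $a(b')'=ab\in E(G)$, since $(b')'=b$. If $a\in Y$, then $a$ has $H$-neighbours only when $a\in U$, and those neighbours lie in $X$, never at $b'\in Y$. So the entry is $0$ in that case, as required.

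The only delicate point is checking that $H$ is well defined as an undirected simple graph and contributes no spurious entries. In particular, one must confirm that vertices of $Y\setminus U$ and all of $X$ receive no $K$-edges, so that no other two-step walks arise. This follows from $M\subseteq \binom{U}{2}$ and from $H$ being bipartite between $X$ and $U$. I expect no real obstacle beyond this bookkeeping, together with the observation that the involution $b\mapsto b'$ makes the identity $A(H)_{ab'}=A(G)_{ab}$ hold exactly.
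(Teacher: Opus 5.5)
Your proof is correct and is essentially the paper's argument: the paper also takes $K$ to be a perfect matching $\mu$ on $U$ and puts the edge $x\mu(y)$ into $H$ for each $y\in U$ and $x\in N_G(y)$, which is exactly your edge set $\{xu : x\in X,\ u\in U,\ xu'\in E(G)\}$ after substituting $u=\mu(y)$. Your columnwise computation $(A(H)A(K))_{ab}=A(H)_{ab'}$, with the case split on $b\in U$ versus $b\notin U$ and $a\in X$ versus $a\in Y$, matches the paper's verification.
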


\begin{proof}
Choose a perfect matching $M$ on the vertex set $U$.  
This matching is not required to be a subgraph of $G$.
In fact, it is simply a pairing of the vertices of $U$.  
For $y\in U$, let $\mu(y)$ denote the mate of $y$ in $M$.
We define $K$ by $E(K):=M$.
Thus $K$ has edges only inside $U$.
Now we define $H$ as follows.  
For every $y\in U$ and every $x\in N_G(y)\subseteq X$, put the edge $x\mu(y)$
in $H$.  
No other edges are added to $H$.
We compute the product.  
If $b\in X$, then column $b$ of $A(K)$ is zero,
because $K$ has no edges incident with $X$.  Hence $(A(H)A(K))_{ab}=0$ for every $a\in V(G)$ whenever $b\in X$.
Now let $b=y\in U$.  
The only $K$-neighbour of $y$ is $\mu(y)$.  
Therefore we have
\[
  (A(H)A(K))_{ay}
  =
  A(H)_{a,\mu(y)}.
\]
If $a=x\in X$, then by the definition of $H$ this equals $1$ precisely when $x\in N_G(y)$, or equivalently, precisely when $xy\in E(G)$.
If $a\in Y$, then $A(H)_{a,\mu(y)}=0$,
because $H$ has no edges inside $Y$.

Finally, if $b\in Y\sm   U$, then column $b$ of $A(K)$ is zero, so again $(A(H)A(K))_{ab}=0$ for every $a\in V(G)$.
Therefore $A(H)A(K)$ has value $1$ exactly on the ordered pairs
\[
  (x,y)\in X\times U
  \quad\text{with}\quad
  xy\in E(G),
\]
and is zero everywhere else.  This proves the claim.
\end{proof}

\begin{cor}
\label{cor:bipartite-one-sided-errors}
Let $G$ be a bipartite graph with bipartition $X\cup Y$, with $m=|E(G)|, n=|V(G)|$.
The following hold.
\begin{enumerate}
  \item If $|Y|$ is even, then $A(G)$ admits an $\eps$-factorization with $\|E\|_0=m,\eps=\frac{m}{n^2}$.

  \item If $y_0\in Y$ and $|Y\sm  \{y_0\}|$ is even, then $A(G)$ admits an
  $\eps$-factorization with
  \[
    \|E\|_0=m+d_G(y_0),
    \qquad
    \eps=\frac{m+d_G(y_0)}{n^2}.
  \]
\end{enumerate}
\end{cor}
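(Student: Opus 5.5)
Both parts follow from \Cref{lem:bipartite-one-sided} by a suitable choice of $U$; what remains is to read off the exact set of disagreements between $A(H)A(K)$ and $A(G)$ and use that set as the error set $E$. By \Cref{lem:hadamard-form}, taking $E$ to be the indicator of the disagreement set gives an $\eps$-factorization with $\eps=\|E\|_0/n^2$. So in each case I will simply count mismatched entries.

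For part (1), take $U:=Y$, which has even cardinality. \Cref{lem:bipartite-one-sided} then yields $H,K$ with $(A(H)A(K))_{ab}=1$ exactly when $a\in X$, $b\in Y$ and $ab\in E(G)$, and $0$ otherwise. Compare this with $A(G)$ entry by entry:
\begin{enumerate}
\item on ordered pairs $(x,y)\in X\times Y$ the two matrices agree;
\item on pairs inside $X$ or inside $Y$ both are $0$, since $G$ is bipartite, so they agree there as well;
\item the only disagreements are the ordered pairs $(y,x)\in Y\times X$ with $xy\in E(G)$, where $A(G)$ has a $1$ and the product has a $0$.
\end{enumerate}
Each undirected edge contributes exactly one such pair, so there are exactly $m$ disagreements. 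Hence $\|E\|_0=m$ and $\eps=m/n^2$.

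For part (2), take $U:=Y\setminus\{y_0\}$, which is even by hypothesis, and apply the lemma again. The product is now $1$ exactly on the pairs $(x,y)$ with $x\in X$, $y\in U$ and $xy\in E(G)$. The disagreements are of two kinds:
\begin{enumerate}
\item all reversed pairs $(y,x)$ with $y\in Y$, which give $m$ entries as in part (1);
\item the pairs $(x,y_0)$ with $xy_0\in E(G)$, which give $d_G(y_0)$ entries, since column $y_0$ of the product is zero.
\end{enumerate}
The two families are disjoint, because the first has its first coordinate in $Y$ and the second has its first coordinate in $X$. So $\|E\|_0=m+d_G(y_0)$ exactly.

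The argument is essentially bookkeeping. The only point needing care is to make sure no disagreements are missed, in particular that the diagonal and the within-class entries of the product vanish, and the lemma's "$0$ otherwise" clause guarantees this. Because the statement claims equality $\|E\|_0=\ldots$ rather than just an upper bound, I will define $E$ to be exactly the disagreement set rather than some larger set.
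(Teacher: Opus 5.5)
Your proposal is correct and is essentially the paper's proof: you apply Lemma~\ref{lem:bipartite-one-sided} with $U=Y$, respectively $U=Y\setminus\{y_0\}$, and count the $m$ reversed entries $(y,x)\in Y\times X$, plus, in part (2), the $d_G(y_0)$ entries $(x,y_0)$ in the zero column of $y_0$. The paper leaves implicit three points that you check explicitly, and they are needed for the stated equalities: within-class and diagonal entries agree, the two error families are disjoint, and $E$ is exactly the disagreement set.
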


\begin{proof}
First suppose $|Y|$ is even.  
We invoke \Cref{lem:bipartite-one-sided} with $U=Y$.
The product $A(H)A(K)$ agrees with $A(G)$ on all ordered entries from $X$ to $Y$ and is zero everywhere else.  
Therefore the only errors are the opposite
ordered edge entries from $Y$ to $X$.  
There is one such ordered entry for each edge of $G$, so the number of errors is $\|E\|_0=m$.
Hence we have  $\eps=\frac{m}{n^2}$.

Now suppose $y_0\in Y$ and $|Y\sm \{y_0\}|$ is even.  
Next we apply \Cref{lem:bipartite-one-sided} with $U=Y\sm  \{y_0\}$.
Again, the product misses all opposite ordered edge entries from $Y$ to $X$,
which contributes $m$ errors.  
In addition, it misses the ordered entries $(x,y_0)$ for $x\in N_G(y_0)$.
Because the column corresponding to $y_0$ in $A(K)$ is zero.  
These contribute $d_G(y_0)$ further errors.  
Thus $\|E\|_0=m+d_G(y_0)$,
and therefore
\[
  \eps=\frac{m+d_G(y_0)}{n^2}.\qedhere
\]
\end{proof}

\begin{thm}
\label{thm:trees-eps}
Let $T$ be a tree on $n\ge 2$ vertices.  
Then $T$ admits an $\eps$-factorization with
$\eps\le \frac{1}{n}$.
More precisely, if one bipartition class of $T$ has even size, then $\eps\le \frac{n-1}{n^2}$.
If both bipartition classes have odd size, then
\[
  \eps\le \frac{n}{n^2}=\frac{1}{n}.
\]
\end{thm}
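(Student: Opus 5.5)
The plan is to apply \Cref{cor:bipartite-one-sided-errors} to the unique bipartition $X\cup Y$ of $T$. A tree on $n\ge2$ vertices is connected and bipartite with $m=n-1$ edges, and its bipartition is unique up to swapping the two classes. Since the corollary places no constraint on which class plays the role of $Y$, I am free to label the classes however is convenient.

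\emph{Case 1: one bipartition class has even size.} I will call that class $Y$ and apply part~(1) of \Cref{cor:bipartite-one-sided-errors}. This gives $\|E\|_0=m=n-1$, and hence $\eps=(n-1)/n^2\le 1/n$.

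\emph{Case 2: both classes have odd size.} Here I have to choose the removed vertex $y_0$ carefully. Part~(2) of the corollary gives $\|E\|_0=m+d_T(y_0)=n-1+d_T(y_0)$, so to reach $n$ errors I need $d_T(y_0)=1$, that is, $y_0$ must be a leaf lying in $Y$.

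\begin{itemize}
\item Every tree on $n\ge2$ vertices has at least two leaves. Let $\ell$ be any leaf, and take $Y$ to be the bipartition class containing $\ell$; this is allowed because the labelling of the classes is free.
\item Since $|Y|$ is odd, $|Y\sm\{\ell\}|$ is even, so part~(2) applies with $y_0=\ell$.
\item This yields $\|E\|_0=(n-1)+1=n$, and therefore $\eps=n/n^2=1/n$.
\end{itemize}

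Combining the two cases gives $\eps\le 1/n$ in general. It also gives the sharper bound $(n-1)/n^2$ whenever some bipartition class has even size.

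The only point needing care is the choice of $y_0$ in Case 2. Removing an arbitrary vertex of odd class $Y$ could cost $d_T(y_0)$ extra errors, and this can be large (for example, the centre of a star). Choosing a leaf and relabelling the classes so that the leaf lies in $Y$ fixes this. I should also check the degenerate case $n=2$: there both classes have size one, so this is Case 2 with $\|E\|_0=2$ and $\eps=1/2=1/n$, which is consistent with the bound.
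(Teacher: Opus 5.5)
Your proposal is correct and matches the paper's proof: the paper also takes $U=Y$ when some class is even, giving $n-1$ errors, and when both classes are odd it picks a leaf $y_0$, relabels so that $y_0\in Y$, and takes $U=Y\setminus\{y_0\}$, giving $(n-1)+1=n$ errors. The only difference is cosmetic: you cite \Cref{cor:bipartite-one-sided-errors} directly, whereas the paper invokes \Cref{lem:bipartite-one-sided} and redoes the error count inline.
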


\begin{proof}
Let $X\cup Y$ be the bipartition of $T$.  Since $T$ is a tree, $|E(T)|=n-1$.
First suppose one bipartition class has even size.  After possibly interchanging $X$ and $Y$, assume $|Y|$ is even.  
We invoke \Cref{lem:bipartite-one-sided} with $U=Y$.
The only errors are the reverse directed edges from $Y$ to $X$, and there are $n-1$ of them.  
Therefore we obtain  $\|E\|_0=n-1$,
and hence $\eps\le \frac{n-1}{n^2}$.

Now suppose both bipartition classes have odd size.  Choose a leaf $y_0$ of
$T$; after possibly interchanging the two colour classes, assume $y_0\in Y$.
Then $|Y\sm  \{y_0\}|$ is even.  
We invoke \Cref{lem:bipartite-one-sided} with $U=Y\sm  \{y_0\}$.
The number of errors is $|E(T)|+d_T(y_0)$.
Since $T$ is a tree, $|E(T)|=n-1$, and since $y_0$ is a leaf, $d_T(y_0)=1$.
Therefore we have $\|E\|_0\le (n-1)+1=n$,
so $\eps\le \frac{n}{n^2}=\frac{1}{n}$.
\end{proof}

We close the paper with the following question:
Is there any lower bound on $\varepsilon$ for non-factrizable graphs?

\section*{Acknowledgment}

This research was supported by NSERC.

\section*{Data Availability}

No datasets were generated or analysed during the current study.

\section*{Declarations}

The author declares no conflict of interest.

\bibliographystyle{plainurlnat}
\bibliography{MPF}

\end{document}